\numberwithin{equation}{section}
\theoremstyle{plain}
\newtheorem{thm}{Theorem}
\newtheorem{prop}[thm]{Proposition}
\newtheorem*{notation}{Notation}
\newcommand{\C}{\mathbb{C}}
\newcommand{\R}{\mathbb{R}}
\newcommand{\p}{\partial}
\DeclareMathOperator{\grad}{\p}
\def\({(\!(}
\def\){)\!)}
\title[CHARACTERIZATION OF ISOLATED HOMOGENEOUS HYPERSURFACE SINGULARITIES]{THE YAU'S CHARACTERIZATION OF
ISOLATED HOMOGENEOUS HYPERSURFACE SINGULARITIES}
\author{Ould M Abderrahmane}
\address{D\'epartement de Math\'ematiques, Universit\'e des Sciences,
de Technologie et de Médecine BP. 880, Nouakchott, Mauritanie}
\email{ymoine@ustm.mr} \subjclass[2010]{Primary  14B05, 32S05.}
\newcommand{\abstracttext}{Using the semicontinuity of the Milnor
number, the Kouchnirenko's theorem \cite{AGK} on the Newton number
and the geometric characterization of $\mu$-constancy in \cite
{{greuel},{LS}, {BT1}}, we give a new proof of Yau's
characterization of isolated homogeneous hypersurface
singularities.}
\begin{document}
\begin{abstract} \abstracttext \end{abstract} \maketitle

\bigskip
\section{Introduction}
\bigskip

 Let $f \colon (\C^n, 0) \rightarrow (\C, 0)$ be the germ of a
holomorphic function with an isolated singularity. The Milnor number
of a germ $f$, denoted by $\mu(f)$, is algebraically defined as the
$\text{dim}\,\mathcal{O}_n/{J(f)}$, where $\mathcal{O}_n$ is the
ring of complex analytic function germs $\colon (\C^n, 0) \to (\C,
0)$ and  $J(f) $ is the Jacobian ideal in $\mathcal{O}_n$ generated
by the partial derivatives $\{\frac{\grad f}{\grad
z_1},\cdots,\frac{\grad f}{\grad z_n}\}$, and the Tjurina number
$\tau$ of the singularity $(V, 0)$ are defined by
$\text{dim}\,\mathcal{O}_n/{Tr(f)}$, where $Tr(f)=\langle
f\rangle+J(f) $ is the   Tjurina ideal of $f$ in $\mathcal{O}_n$.
 We recall that
 the multiplicity $m(f)$
is defined  as the lowest degree in the power series expansion of
$f$ at $0\in \C^n$.

Moreover, $f$ is called weighted homogeneous of degree $d$ with
respect the weight $w=(w_1,\dots,w_n)$ (or type $(d; w)$) if $f$ may
be written as a sum of monomials $z_1^{\alpha_1}\cdots
z_n^{\alpha_n}$ with
\begin{equation}\label{weight}
\alpha_1 w_1 +\dots +\alpha_n w_n =d.
\end{equation}

It is a natural question to ask when $V$ can be defined by a
weighted homogeneous polynomial or a homogeneous polynomial up to a
biholomorphic change of coordinates. The former question was
answered in a celebrated  paper by Saito in $1971$ \cite{KS}.
However, the latter question had remained open for $40$ years until
Xu and Yau solved it for $f$ with three variables \cite{XY2}. Then
Yau and Zuo \cite{YZ1} solved it for $f$ with up to six variables.
Recently, Yau and  Zuo \cite{YZ2} solve the latter question
completely; i.e., they show that $f$ is a homogeneous polynomial
after a biholomorphic change of coordinates if and only if $\mu=
\tau = (m - 1)^n$, where $\mu$, $\tau$ and $m$ are, the Milnor
number, the Tjurina number and the multiplicity of the singularity
respectively.

The aim of this is paper is to give an alternative proof of the
result of Yau and Zuo \cite{YZ2}. Our proof is simple and
elementary. The main tool is the semicontinuity of the Milnor
number,  the Kouchnirenko's theorem \cite{AGK} on the Newton number
and the geometric characterization of $\mu$-constancy in \cite
{{greuel},{LS}, {BT1}}. First, we give an alternative proof of the
first part of Yau conjecture $1.7$ in \cite{LWY}(Proposition
\ref{prop} below). By using it and the splitting lemma, we obtain
the Yau's characterization of isolated homogeneous hypersurface
singularities.

\bigskip


The purpose of this paper is to prove the following results:

\begin{thm}[Yau conjecture $1.7$ in \cite{LWY}]\label{main}

$(1)$ Let $f :(\C^n,0) \to (\C,0)$ be a holomorphic germ defining an
isolated hypersurface singularity $V = \{z \in\C^n\; :\; f(z)=0\}$
at the origin. Let $\mu$ and $m$ be the Milnor number and
multiplicity of $(V,0)$, respectively. Then
\begin{equation}\label{1.1}
\mu \geq (m - 1)^n,
\end{equation}
 and the equality in $(\ref{1.1})$ holds if and only if
$f$ is a semi-homogeneous function (i.e., $f = f_m + g$, where $f_m$
is a nondegenerate homogeneous polynomial of degree $m$ and $g$
consists of terms of degree at least $m +1$).

$(2)$ Suppose that $f$ is a weighted homogenous function. Then the
equality in $(\ref{1.1})$ holds if and only if $f$ is a homogeneous
polynomial (after a biholomorphic change of coordinates).
\end{thm}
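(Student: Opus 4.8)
The plan is to separate the lower bound $(\ref{1.1})$ and the ``easy'' implications, which come from a generic linear change of coordinates together with Kouchnirenko's theorem and the semicontinuity of $\mu$, from the two ``hard'' implications, where the geometric characterization of $\mu$-constancy and, for part $(2)$, the splitting lemma do the work.

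\textbf{The inequality and the easy half of $(1)$.} After a generic linear change of coordinates the degree-$m$ part $f_m$ of $f$ contains every pure power $z_i^m$ with nonzero coefficient, since that coefficient is the value of $f_m$ at a generic vector and $f_m\not\equiv0$. As every monomial of $f$ has total degree $\geq m$, the Newton polyhedron of $f$ is caught between $\{\alpha\geq0:\sum_i\alpha_i\geq m\}$ and the convex hull of the cones at $me_1,\dots,me_n$, so it equals the Newton polyhedron of $z_1^m+\dots+z_n^m$. In particular $f$ is convenient in these coordinates, so Kouchnirenko's theorem \cite{AGK} gives $\mu(f)\geq\nu(f)=(m-1)^n$; as $\mu$ and $m$ are invariant under linear changes this is $(\ref{1.1})$. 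If, in addition, $f=f_m+g$ is semi-homogeneous, the holomorphic family $f_t(z):=t^{-m}f(tz)=f_m(z)+tf_{m+1}(z)+\cdots$ has an isolated singularity at $t=0$ (because $f_m$ is nondegenerate) and satisfies $\mu(f_t)=\mu(f)$ for $t\neq0$, so upper semicontinuity of the Milnor number forces $\mu(f)\leq\mu(f_m)=(m-1)^n$ and hence equality in $(\ref{1.1})$.

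\textbf{The hard half of $(1)$.} Suppose $\mu(f)=(m-1)^n$; the goal is that $f_m$ has an isolated singularity. Still in generic coordinates, put $q=z_1^m+\dots+z_n^m$ and consider $F(z,s)=f(z)+sq(z)$. For small $s$ the germ $F_s$ retains multiplicity $m$ and an isolated singularity, so $\mu(F_s)\geq(m-1)^n$ by $(\ref{1.1})$, while semicontinuity gives $\mu(F_s)\leq\mu(F_0)=\mu(f)=(m-1)^n$; thus $\{F_s\}$ is a $\mu$-constant family of multiplicity $m$. For generic small $s\neq0$ the leading form $f_m+sq$ is a generic member of the pencil spanned by $f_m$ and $q$, hence has an isolated singularity, so $F_s$ is semi-homogeneous for such $s$. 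The geometric characterization of $\mu$-constancy of \cite{greuel,LS,BT1} is then invoked to transport semi-homogeneity from the generic member to $F_0=f$. I expect this to be the main obstacle: what has to be ruled out is a Brian\c{c}on--Speder--type degeneration of the tangent cone along a $\mu$-constant family, and the leverage available is precisely the minimality $\mu(f)=(m-1)^n$ --- equivalently, that every Milnor number $\mu^{(i)}(f)$ of a generic $i$-plane section attains its smallest possible value $(m-1)^i$ --- which must be shown to forbid any such degeneration.

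\textbf{Part $(2)$.} Let $f$ be weighted homogeneous of type $(d;w_1,\dots,w_n)$ with $\mu(f)=(m-1)^n$; by $(1)$ it is semi-homogeneous, $f=f_m+g$ with $f_m$ nondegenerate homogeneous of degree $m$. Ordering $w_1\leq\dots\leq w_n$, I would combine the Milnor--Orlik formula $\mu(f)=\prod_{i=1}^n\bigl(\tfrac{d}{w_i}-1\bigr)$ with the structure of a weighted homogeneous isolated singularity, which ties the multiplicity to the weights (giving $m=d/w_n$ in the principal case). Since $\tfrac{d}{w_i}-1\geq\tfrac{d}{w_n}-1>0$, the equality $\prod_i\bigl(\tfrac{d}{w_i}-1\bigr)=(m-1)^n$ then forces every factor to equal $\tfrac{d}{w_n}-1$, so $w_1=\dots=w_n$ and $f$ is homogeneous in the coordinates already chosen; the finitely many remaining low-multiplicity cases (notably $m=2$, where $\mu=1$ and $f$ is Morse) are disposed of directly by the splitting lemma, which writes $f$ in a homogeneous normal form. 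The converse in $(2)$ is immediate from $(1)$: a homogeneous $f$ with isolated singularity is semi-homogeneous with $g=0$, hence has $\mu=(m-1)^n$.
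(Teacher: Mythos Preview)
Your treatment of the inequality and of the easy implications in both parts is valid and close in spirit to the paper (the paper deforms by $\sum t_iz_i^m$ rather than first passing to generic coordinates, but either route feeds the same Kouchnirenko computation). The two places where your outline genuinely falls short of a proof are exactly the ones you flag, and in both the paper does something more specific than what you propose.

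\textbf{Hard half of $(1)$.} Your one-parameter family $F_s=f+sq$ with $q=\sum z_i^m$ is $\mu$-constant, but the information you can extract from \cite{greuel,LS,BT1} through it is only $|q(z)|\ll\|\grad_z f\|$, and $q$ vanishes on a hypersurface, so this does not control $\grad_z f_m$. ``Transporting semi-homogeneity along a $\mu$-constant family'' is precisely what Brian\c con--Speder shows can fail in general, so some extra leverage is needed and you do not supply it. The paper's fix is simple: use the $n$-parameter deformation $F(z,t)=f(z)+\sum_{i=1}^n t_i z_i^m$. Then $\partial_{t_i}F=z_i^m$, so $\|\grad_tF\|\sim\|z\|^m$, and the Thom-map form of the $\mu$-constancy criterion gives $\|z\|^m\ll\|\grad_zF\|$; specializing $t=0$ yields $\|z\|^m\ll\|\grad_z f\|$. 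Since the higher-order part of $\grad_z f$ is $\lesssim\|z\|^m$, this forces $\|z\|^m\ll\|\grad_z f_m\|$, i.e.\ $\grad_z f_m(z)=0\Rightarrow z=0$. No genericity or pencil argument is needed.

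\textbf{Part $(2)$.} Your Milnor--Orlik argument hinges on $m=d/w_n$, but with $w_1\le\cdots\le w_n$ one only has $d/w_n\le m\le d/w_1$ in general (e.g.\ $f=z_1^3z_2+z_1z_2^2$, $w=(1,2)$, $d=5$, $m=3>d/w_2$), so the chain ``each factor $\ge m-1$, product $=(m-1)^n$, hence all equal'' does not get off the ground. The paper avoids Milnor--Orlik entirely: from part $(1)$ it already knows $f_m$ has an isolated singularity, so for each $k$ some monomial $z_k^{m}$ or $z_k^{m-1}z_j$ occurs in $f_m\subset f$ and therefore satisfies the weight relation. Playing off the relation for $k=1$ against the one for $k=n$ (four short cases, using $m\ge 3$ so that $m-2\ge 1$) forces $w_1=\cdots=w_n$ directly. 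The case $m=2$ is handled separately; there the paper really does use the splitting lemma, reducing to $f=z_1z_n+f(0,z_2,\dots,z_{n-1},0)$ and iterating.
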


\begin{notation}
To simplify the notation, we will adopt the following conventions\,:
for a function $F(z, t)$(for $(z, t)\in\C^n\times \C^m$) we denote
by $\grad F$ the gradient of $F$ and by $\grad_z F$ the gradient of
$F$ with respect to variables $z$. Also, we denote by $ \| \grad
F\|^2=\| \grad_z F\|^2+\| \grad_t F\|^2=\sum_{ i=1}^n |\frac{\grad
F}{\grad z_i} |^2+\sum_{ j=1}^m |\frac{\grad F}{\grad t_j} |^2. $

Let $\varphi,\,\, \psi \colon(\C^n, 0) \to \R$ be two function
germs. We say that $ \varphi(x)\lesssim \psi(x)$ if there exists a
positive constant $C> 0$ and an open neighborhood $U$ of the  origin
in $\C^n$ such that $\varphi(x)\leq C \; \psi(x)$, for all $x \in
U$. We write $ \varphi(x) \sim \psi(x)$  if $\varphi(x) \lesssim
\psi(x)$ and $\psi(x)\lesssim \varphi(x)$. Finally,
$|\varphi(x)|\ll|\psi(x)|$ (when $x$ tends to $x_0=0$) means
$\lim_{x\to x_0}\frac{\varphi(x)}{\psi(x)}=0$.

\end{notation}

\bigskip
\section{Proof of the  theorem \ref{main}}
 Before starting the proofs, we will recall some
important results on the Newton number and the geometric
characterization of $\mu$-constancy.

\begin{thm}[Greuel \cite{greuel}, L\^e-Saito \cite{LS}, Teissier \cite{BT1}]\label{le-saito-tessier}
Let $F \colon (\C^n\times \C^m, 0) \to (\C, 0)$ be the deformation
of a holomorphic $f \colon (\C^n, 0) \to (\C, 0)$ with isolated
singularity. The following statements are equivalent.
\begin{enumerate}
\item $F$ is a $\mu$-constant deformation of $f$.

\item $\frac{\grad F}{\grad t_j}\in \overline{J(F_t)}$, where $\overline{J(F_t)}$
denotes the integral closure of the Jacobian ideal of $F_t$
generated by the partial derivatives of $F$ with respect to the
variables $z_1,\dots, z_n$.

\item  The deformation $ F(z, t) = F_t(z)$ is a Thom map, that is,
$$
\sum_{ j=1}^m |\frac{\grad F}{\grad t_j} | \ll \| \grad F\| \text{
as } (z, t) \to (0, 0).
$$

\item  The polar curve of $F$ with respect to $\{t = 0\}$ does not split,
that is,
$$
\{(z,t) \in \C^n \times \C^m \,\,| \,\,\grad_zF (z,t) = 0\} = \{0\}
\times \C^m \text{ near } (0,0).
$$

\end{enumerate}
\end{thm}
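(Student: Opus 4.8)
The plan is to prove the chain of equivalences $(1)\Leftrightarrow(4)$, $(4)\Leftrightarrow(2)$, $(2)\Leftrightarrow(3)$; the one substantial implication is $(4)\Rightarrow(2)$ (``$\mu$-constancy $\Rightarrow$ relative integral dependence''), the rest being comparatively accessible. For $(1)\Leftrightarrow(4)$ I would use conservation of the Milnor number. Fix a Milnor ball $B$ for $f$ at $0$ and a small polydisc $D\subset\C^m$ so that the relative critical set $V\!\left(\frac{\grad F}{\grad z_1},\dots,\frac{\grad F}{\grad z_n}\right)\cap(\bar B\times D)$ is proper over $D$ with $(0,0)$ in its interior; since $f$ has an isolated singularity its fibre over $0$ is $\{0\}$, so the projection is finite, and $t\mapsto\sum_{q}\mu_q(F_t)$ (sum over the critical points $q$ of $F_t$ in $B$) is constant — it is the topological degree of $\grad_z F_t$ on $\p B$ — and equals $\mu_0(f)$. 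Hence $\mu_0(F_t)=\mu_0(f)$ for all small $t$ precisely when no critical point other than $0$ ever enters $B$, i.e. when $V(\grad_z F)=\{0\}\times\C^m$ near $(0,0)$, which is $(4)$.

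For $(2)\Leftrightarrow(3)$ I would invoke the analytic (Lejeune--Teissier) description of integral closure: writing $J(F_t)$ for the ideal of $\mathcal O_{n+m}$ generated by $\frac{\grad F}{\grad z_1},\dots,\frac{\grad F}{\grad z_n}$, one has $\frac{\grad F}{\grad t_j}\in\overline{J(F_t)}$ for all $j$ if and only if $\sum_j\bigl|\frac{\grad F}{\grad t_j}\bigr|\lesssim\| \grad_z F\|$ near $(0,0)$; since $\| \grad F\|^2=\| \grad_z F\|^2+\| \grad_t F\|^2$, this big-$O$ bound is equivalent to the little-$o$ bound $\sum_j\bigl|\frac{\grad F}{\grad t_j}\bigr|\ll\| \grad F\|$ of $(3)$ once the singularity is known to be isolated (along an arc realising equality of orders the relative polar curve would split, which $(4)\Leftrightarrow(1)$ forbids). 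For $(2)\Rightarrow(4)$: from $\frac{\grad F}{\grad t_j}\in\overline{J(F_t)}$ one gets $\overline{J(F)}=\overline{J(F_t)}$, hence $V(\grad F)=V(\grad_z F)$ as germs; were $V(\grad_z F)$ strictly larger than $\{0\}\times\C^m$ it would have an $m$-dimensional component meeting $\{t=0\}$ only at the origin, projecting finitely onto $\C^m$ and so producing, for small $t$, critical points of $F_t$ distinct from $0$ — a splitting which Teissier's principle of specialisation of integral dependence shows is incompatible with the integral-dependence relation. This gives $(4)$.

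The heart of the matter is $(4)\Rightarrow(2)$, and here I would follow L\^e--Saito and Teissier. When $V(\grad_z F)=\{0\}\times\C^m$ near $(0,0)$, the $n$ functions $\frac{\grad F}{\grad z_1},\dots,\frac{\grad F}{\grad z_n}$ cut out a set of the expected dimension $m$ and hence form a regular sequence, so the relative Milnor algebra $\mathcal O_{n+m}/J(F_t)$ is Cohen--Macaulay of dimension $m$, hence free over $\mathcal O_m=\C\{t\}$; this equisingularity of the relative Jacobian produces a \L ojasiewicz inequality $\| \grad_z F_t(z)\|\gtrsim|z|^{\theta}$ with exponent $\theta$ independent of $t$. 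Since the singular point stays at the origin with critical value $0$ we have $F(0,t)\equiv0$, so each $\frac{\grad F}{\grad t_j}$ vanishes on $\{z=0\}$; comparing orders along an arbitrary analytic arc $\varphi(s)\to(0,0)$ and feeding in the uniform \L ojasiewicz bound then yields $\mathrm{ord}_s\frac{\grad F}{\grad t_j}(\varphi(s))\ge\min_i\mathrm{ord}_s\frac{\grad F}{\grad z_i}(\varphi(s))$, i.e. $\frac{\grad F}{\grad t_j}\in\overline{J(F_t)}$. I expect this step to be the main obstacle: it is the genuine equisingularity content of the statement — the interplay between the relative polar curve, integral closure, and the \L ojasiewicz exponent — and rests on the curve-selection and integral-dependence arguments of Teissier's work on evanescent cycles and of L\^e--Saito, not on any formal manipulation of ideals.
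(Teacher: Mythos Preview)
The paper does not prove this theorem at all: it is stated as a background result with attribution to Greuel, L\^e--Saito and Teissier, and the paper immediately moves on to use it in the proof of Proposition~\ref{prop}. There is therefore nothing to compare your proposal against; the theorem is quoted, not proved.

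As a self-contained sketch your outline is broadly in the spirit of the original sources, and you correctly identify $(4)\Rightarrow(2)$ as the substantial step. A couple of points you may want to sharpen if you pursue this. First, your passage from the big-$O$ bound (which is what the Lejeune--Teissier characterisation of integral closure gives) to the little-$o$ bound in $(3)$ is not free: the argument you indicate (``along an arc realising equality of orders the polar curve would split'') is circular as written, since it appeals to $(1)\Leftrightarrow(4)$ inside the proof of $(2)\Leftrightarrow(3)$; in the original treatments the refinement to little-$o$ comes out of the geometry of the normalised blow-up along $J(F_t)$ rather than from a separate arc argument. Second, in your sketch of $(4)\Rightarrow(2)$ the step ``comparing orders along an arbitrary analytic arc and feeding in the uniform \L ojasiewicz bound'' does not by itself yield the required inequality of orders: a uniform bound $\|\grad_z F_t(z)\|\gtrsim |z|^\theta$ controls $\|\grad_z F\|$ from below in terms of $|z|$, but $\frac{\grad F}{\grad t_j}$ need only vanish to first order in $z$, so one does not immediately get $\mathrm{ord}\,\frac{\grad F}{\grad t_j}\ge\min_i\mathrm{ord}\,\frac{\grad F}{\grad z_i}$ on every arc. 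The actual argument (Teissier's principle of specialisation of integral dependence, or the flatness/Cohen--Macaulay argument in Greuel) is more delicate than the comparison you describe.
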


Firstly, we  want to show the first part of the main theorem.

\begin{prop}[Teissier \cite{BT1}, Furuya-Tomari\cite{Furuya-Tomai}]\label{prop}
 Let $f :(\C^n,0) \to (\C,0)$ be a holomorphic
germ defining an isolated hypersurface singularity $V = \{z
\in\C^n\; :\; f(z)=0\}$ at the origin. Let $\mu$ and $m$ be the
Milnor number and multiplicity of $(V,0)$, respectively. Then
\begin{equation}\label{p1}
\mu \geq (m - 1)^n,
\end{equation}
 and the equality in $(\ref{p1})$ holds if and only if
$f$ is a semi-homogeneous function.

\end{prop}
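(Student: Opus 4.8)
The plan is to derive all three assertions from a single auxiliary deformation $G(z,t)=f(z)+t\,q(z)$, where $q$ is a \emph{generic} homogeneous polynomial of degree $m$, together with the lower bound coming from Kouchnirenko's theorem; throughout, $\mathfrak m$ denotes the maximal ideal of $\mathcal O_n$. Since every degree-$m$ monomial occurs in $q$, and since for generic $q$ the form $f_m+tq$ vanishes identically for no value of $t$, each $G_t$ has multiplicity $m$ and Newton polyhedron equal to the convenient polyhedron $\Gamma=\{x\in\R^n_{\geq 0}:x_1+\dots+x_n\geq m\}$; a direct evaluation of the Newton number of $\Gamma$ --- the alternating sum of the normalized volumes it cuts out on the coordinate subspaces --- collapses by the binomial theorem to $\nu(\Gamma)=(m-1)^n$.

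For $(\ref{p1})$ and the ``if'' direction: for generic $q$ and all but finitely many small $t$ the face polynomials of $G_t$ (the restrictions of $f_m+tq$ to coordinate subspaces) are nondegenerate, so $G_t$ is Newton nondegenerate and Kouchnirenko's theorem gives $\mu(G_t)=\nu(\Gamma)=(m-1)^n$; upper semicontinuity of the Milnor number along $t\mapsto G_t$ then yields $\mu(f)=\mu(G_0)\geq(m-1)^n$. If moreover $f=f_m+g$ with $f_m$ a nondegenerate homogeneous polynomial of degree $m$, consider the rescaled family $F(z,s):=s^{-m}f(sz)=f_m(z)+sf_{m+1}(z)+\cdots$; it is holomorphic, satisfies $\mu(F_s)=\mu(f)$ for $s\neq 0$ (the substitution $z\mapsto sz$ is biholomorphic), and has $F_0=f_m$ with $\mu(f_m)=\dim_\C\C[z_1,\dots,z_n]/J(f_m)=(m-1)^n$ by B\'ezout. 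Semicontinuity then gives $\mu(f)\leq(m-1)^n$, and with $(\ref{p1})$ we obtain equality.

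For the ``only if'' direction, assume $\mu(f)=(m-1)^n$. Applying $(\ref{p1})$ to each $G_t$ (which still has multiplicity $m$) and semicontinuity to $t\mapsto G_t$ forces $(m-1)^n\leq\mu(G_t)\leq\mu(G_0)=(m-1)^n$, so $G$ is a $\mu$-constant deformation of $f$. By Theorem \ref{le-saito-tessier}(2) (or the Thom condition (3)), $\p G/\p t=q$ lies in $\overline{J(G_t)}$; specializing to $t=0$ gives $|q(z)|\lesssim\|\grad f(z)\|$ near $0$, i.e. $q\in\overline{J(f)}$. As this holds for every generic degree-$m$ form $q$, and the degree-$m$ forms lying in $\overline{J(f)}$ constitute a linear subspace, \emph{all} degree-$m$ forms --- in particular all degree-$m$ monomials --- lie in $\overline{J(f)}$, so $\mathfrak m^m\subseteq\overline{J(f)}$. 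On the other hand every $\p f/\p z_i$ has order $\geq m-1$, hence $J(f)\subseteq\mathfrak m^{m-1}$ and $\overline{J(f)}\subseteq\mathfrak m^{m-1}$; and since $J(f)$ is a parameter ideal, $e(\overline{J(f)})=e(J(f))=\dim_\C\mathcal O_n/J(f)=(m-1)^n=e(\mathfrak m^{m-1})$. Rees' theorem on multiplicities and integral closure then forces $\overline{J(f)}=\mathfrak m^{m-1}$, equivalently $\|z\|^{m-1}\lesssim\|\grad f(z)\|$ near $0$. Writing $\grad f=\grad f_m+O(\|z\|^m)$ and absorbing the error term for $\|z\|$ small, we get $\|z\|^{m-1}\lesssim\|\grad f_m(z)\|$, which by homogeneity means $\grad f_m$ vanishes only at the origin; hence $f_m$ is nondegenerate and $f$ is semi-homogeneous.

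The main obstacle is this last passage. Theorem \ref{le-saito-tessier} only delivers membership of degree-$m$ forms, namely $\mathfrak m^m\subseteq\overline{J(f)}$, and this alone is too weak: the higher-order terms in $\grad f$ have exactly order $m$ and cannot be absorbed into an estimate of the form $\|z\|^m\lesssim\|\grad f_m(z)\|$. One genuinely needs to upgrade it to $\mathfrak m^{m-1}\subseteq\overline{J(f)}$, and the input that makes this possible is the multiplicity identity $e(J(f))=(m-1)^n$ combined with Rees' criterion --- equivalently, the purely algebraic statement that the initial forms $\p f_m/\p z_i$ of the $\p f/\p z_i$ form a regular sequence as soon as $\dim_\C\mathcal O_n/J(f)$ equals the product of their orders. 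A minor technical point, handled by the genericity of $q$, is ensuring that $G$ is $\mu$-constant for \emph{every} small $t$ rather than merely for generic $t$: this uses that $f_m+tq$ is never identically zero, so that each $G_t$ retains multiplicity $m$.
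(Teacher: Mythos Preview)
Your proof is correct, but the ``only if'' direction takes a heavier route than the paper's. You invoke condition (2) of Theorem~\ref{le-saito-tessier} to get $\mathfrak m^m\subseteq\overline{J(f)}$, correctly observe that the resulting big-$O$ bound $\|z\|^m\lesssim\|\grad f\|$ cannot absorb the order-$m$ remainder $\grad f-\grad f_m$, and then repair this via Rees' multiplicity theorem (from $J(f)\subseteq\mathfrak m^{m-1}$ and $e(J(f))=(m-1)^n=e(\mathfrak m^{m-1})$ you deduce $\overline{J(f)}=\mathfrak m^{m-1}$, hence $\|z\|^{m-1}\lesssim\|\grad f\|$). This works --- though note that once Rees is invoked the deformation and Theorem~\ref{le-saito-tessier} become redundant for this direction, since the two inputs to Rees follow immediately from the hypotheses. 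The paper instead exploits condition (3), the Thom condition, which you mention parenthetically but do not use: with the $n$-parameter deformation $f+\sum_i t_iz_i^m$ one obtains the \emph{little-$o$} estimate $\|z\|^m\sim\sum_i|z_i^m|=\|\grad_t F\|\ll\|\grad_z F\|$, and specializing to $t=0$ gives $\|z\|^m\ll\|\grad f\|$. This little-$o$ absorbs the $O(\|z\|^m)$ remainder directly and yields $\|z\|^m\ll\|\grad f_m\|$ with no appeal to multiplicity theory. So the ``obstacle'' you flag is an artifact of choosing the integral-closure formulation (2) over the asymptotic formulation (3). For the inequality and the ``if'' direction your arguments (a one-parameter deformation by a generic degree-$m$ form, and the rescaling family $s^{-m}f(sz)$ in place of the citation to \cite{AGV}) are minor variants of the paper's and equally valid.
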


\begin{proof}
Let
$$
f=f_{m}+f_{m+1}+\cdots; f_{m}\neq 0,
$$
be the Taylor expansion of $f$ at the origin. Take the following
family of singularities
\begin{equation}\label{2.1}
F(z,t)=f_t(z)=f_{m}(z)+\sum_{i=1}^nt_i\,z_i^{m} +f_{m+1}+\cdots;
\quad t=(t_1,\cdots,t_n)  \text{ near } 0\in\C^n.
\end{equation}

It follows from the upper semicontinuity of Milnor number \cite{JM}
and the Kouchnirenko's result \cite{AGK} that
\begin{equation}\label{2.2}
\mu(f)\geq\mu(f_t)\geq\nu(\Gamma_-(f_t))=(m -1)^n,
\end{equation}

where $\nu(\Gamma_-(f_t))$ is the Newton number (see \cite{AGK} for
details).

Now we have to prove that equality in $(\ref{p1})$ holds if and only
if $f$ is a semi-homogeneous function.

$\Leftarrow$  Suppose $f$ is a semi-homogeneous function, so we get
by the theorem in (\cite{AGV} section $12.2$) that $\mu(f)=(m
-1)^n$.



$\Rightarrow$ Let $f_t$ the deformation of $f$ defined in
$(\ref{2.1})$, it follows from the upper semicontinuity of Milnor
number \cite{JM} and the $(\ref{2.2})$ that
$$
(m -1)^n=\mu(f)\geq\mu(f_t)\geq  (m -1)^n,\; \text{ for each }
t_i\neq 0, \;i=1,\cdots,n.
$$
Hence the deformation $f_t$ is $\mu$-constant, so by the theorem
\ref{le-saito-tessier} we get\, :
$$
\parallel\p_t F\parallel \sim\parallel
z\parallel^m\ll\parallel\p_z F\parallel\quad \text{ for }(z,t)\to
(0,0).
$$
Therefore for $t=0$, we obtain

\begin{equation}\label{2.3}
\parallel
z\parallel^m\ll\parallel\p_z f\parallel\quad \text{ for }z\to 0.
\end{equation}

But $\parallel\p_z f\parallel\leq\parallel\p_z f_m\parallel +
\parallel\p_z f_{m+1}+\p_z f_{m+2}+\dots\parallel$, it is easy to find that
$$
\parallel z\parallel^m\gtrsim \parallel\p_z f_{m+1}+\p_z
f_{m+2}+\dots\parallel \text{ for }z\to 0.
$$
It follows from (\ref{2.3}) that
$$
\parallel
z\parallel^m\ll\parallel\p_z f_m\parallel\quad \text{ for }z\to 0.
$$
Which implies that $\p_zf_m=0$ if and only if $ z=0$. So, we obtain
that $f_m$ has an isolated singularity at the origin.
  This end the proof of the first
part of the  main theorem.
\end{proof}

Finally, suppose that $f$ is a weighted homogeneous function, and we
intend to show that the equality in $(\ref{1.1})$ holds if and only
if $f$ is a homogeneous polynomial (after a biholomorphic change of
coordinates).

Since $(\Leftarrow)$ is trivial it is enough to see $(\Rightarrow)$.

Modulo a permutation coordinate of $\C^n$, we may assume that
\begin{equation}\label{;;;}
w_1\leq w_2\leq\cdots\leq w_n.
\end{equation}

 Let $f=f_{m}+f_{m+1}+\cdots;\text{ with } f_{m}\neq 0,$ be the
Taylor expansion of $f$ at the origin. Since $f$ is weighted
homogenous of degree type $d$.

If, we suppose that $m=2$, then we have $\mu(f)=(2-1)^n=1$, it
follows from the above proposition \ref{prop} that $f_2$ defining an
isolated singularity at the origin $0\in \C^n$, then there exists
the term $z_n^2$ or $z_nz_j$ for some integers $j\geq1$ with
non-zero coefficients in $f$, also $z_1^2$ or $z_1z_i$ for some
integers $i\geq1$ appears in expansion of $f$. There are two cases
to be considered.

{\bf Case 1.} In this case, we suppose that $z_n^2$ or $z_1^2$
appears in expansion of $f$, then we get
$$
d=2w_n \geq 2w_{n-1}\geq\cdots\geq 2w_1=d,
$$
or
$$
d=2w_n\geq w_n+w_i\geq w_1+w_i=d,
$$
or
$$
d=w_n+w_j\geq w_n+w_1\geq 2w_1=d.
$$
But this clearly implies
$$
w_1=w_2=\cdots=w_n.
$$

{\bf Case 2.} In this case, we suppose $z_nz_j$ and $z_1z_i$  for
some integers $i\geq 1$ and $j\geq 1$ with non-zero coefficients in
$f$, then
$$
d=w_n+w_j=w_1+w_i.
$$
 Moreover, for any
monomial $z_1^{\alpha_1}\cdots z_n^{\alpha_n}$ of $f$ with
${\alpha_n}\neq 0$, we have
$$
d=\sum_{j<n}\alpha_jw_j +\alpha_nw_n\geq
\left(\sum_{j<n}\alpha_j\right)w_1+ w_n\geq w_1 +w_i=d.
$$
Then,
\begin{equation}\label{2.7}
\sum_{j<n}\alpha_j=1,\quad {\alpha_n}=1 \quad \text{ and } w_i=w_n.
\end{equation}
 Therefore we
may write
$$
f(z)=a_jz_jz_n + \sum_{k\neq j} a_k z_kz_n+ f(z_1,\dots, z_{n-1},
0),\quad a_{j}\neq0.
$$
If $a_k\neq0$, then we have $d=w_n+w_k =w_n+w_j=w_1+w_i$, it follows
from $(\ref{2.7})$ that  $w_j=w_k=w_1$. After a permutation of
coordinates with same weights it can be written as
$$
f(z)=z_1z_n + \sum_{j>1} a_j z_jz_n+ f(z_1,\dots, z_{n-1}, 0).
$$
 Then we may assume, by a change of coordinates of the form
$\xi_{1}=z_{1} + \sum_{j>1} a_jz_j$, that $ f(z)=z_1z_n+ f(z_1,\dots
z_{n-1}, 0)=z_1(z_n + g(z))+f(0,z_2\dots,z_{n-1}, 0)$.
 Also by a change of
coordinates of the form $\xi_n=z_n +g(z)$, we can assume that
$$
f(z)=z_1z_n +f(0,z_2,\dots, z_{n-1},0).
$$
Continuing the same process on $f(0,z_2,\dots, z_{n-1},0)$, we can
deduce by   the splitting lemma,
 that  $f(z)=z_1^2+\cdots +z_n^2$ after a biholomorphic
change of coordinates.\\


From now, we  suppose that $m\geq 3$, it follows from the above
proposition \ref{prop}  that $f_{m}$ is an isolated singularity at
the origin.  Since $f_{m}$ is a homogeneous of degree $m$ with
isolated singularity, it is easy to check that the monomial
$z_1^{m}$ or $z_1^{m-1}z_i$  for some integers $i>1$ appears in the
expansion of $f$. There are two cases to be considered.

{\bf Case 1.} In this case, we suppose $z_1^{m}$ appears in the
expansion of $f_{m}$. Since $f_{m}$ is an isolated singularity at
the origin $0\in \C^n$, there exist the terms $z_n^{m}$ or
$z_n^{m-1}z_j$ with non-zero coefficients in $f$.

 From the hypotheses, we have
$$
d=m w_n\geq m w_{n-1}\geq\cdots \geq m w_1=d,
$$
or
$$
d=(m-1)w_n + w_j\geq (m-1)w_1 + w_j \geq (m-1) w_1 +w_1=d.
$$
Hence
$$
w_1=w_2=\dots=w_n.
$$
This end the proof of second part of the main theorem in the first
case.

 {\bf Case 2.} In this case, we suppose
$z_1^{m-1}z_i$ appears in the expansion of $f_{m}$, since $f_{m}$
defining an isolated singularity at the origin $0\in \C^n$, there
exist the terms $z_n^{m}$ or $z_n^{m-1}z_j$ with non-zero
coefficients in $f$.

From the hypotheses, we have
$$
d=m w_n\geq (m -1)w_n+ w_i\geq (m-1) w_1 +w_i=d,
$$
or
$$
d=(m-1)w_n + w_j\geq (m-2)w_n +w_i+ w_j\geq (m-2)w_1 +w_i+ w_j \geq
(m-2) w_1 +w_i+w_1=d.
$$
Hence
$$
w_1=w_2=\dots=w_n.
$$
This completes the proof of the  theorem.

\bigskip

\bigskip

\end{document}